\newif\ifabstr
\newif\ifcomments
\newtheorem{theorem}{Theorem}
\newtheorem{lemma}[theorem]{Lemma}
\theoremstyle{remark}
\newtheorem*{remark*}{Remark}
\theoremstyle{definition}
\newcommand{\R}{\mathbb{R}}
\newcommand{\Z}{\mathbb{Z}}
\newcommand{\marrow}{\marginpar{\boldmath$\longleftarrow$}}
\newcommand{\martin}[1]{\ifhmode\newline\fi\marrow
\textsf{\textcolor{green}{\bf MARTIN:} #1\newline}}
\newcommand{\martin}[1]{}
\long\def\comment#1\endcomment{}
\def\Int{\mathop{\fam0 Int}}
\def\t{\widetilde}
\DeclareMathOperator\NO{NO}
\def\Phin{\Phi_{\NO}}
\def\Kn{K_{\NO}}
\title{Hardness of almost embedding simplicial complexes in $\R^d$
%\footnote{Preliminary version; please do not distrubute.}
%\centerline{Manuscript in preparation}
}
\author{Arkadiy Skopenkov\thanks{Moscow Institute of Physics and Technology,
  Institutskiy per., Dolgoprudnyi, 141700, Russia,
  and Independent University of Moscow.
  B. Vlasy\-ev\-skiy, 11, Moscow, 119002, Russia. Research supported by the
  Russian Foundation for Basic Research Grant No. 15-01-06302, by Simons-IUM Fellowship and by the D. Zimin's Dynasty Foundation Grant.
  Email:
  \texttt{skopenko@mccme.ru}} \and Martin Tancer\thanks{
Department of Applied Mathematics, Charles University in Prague, Malostransk\'e n\'am. 25, 118 00,
Praha 1. Supported by the GA\v{C}R project 16-01602Y.
}}
\date{}
\begin{document}

\maketitle

\begin{abstract}
A map $f\colon K\to \R^d$ of a simplicial complex is an \emph{almost embedding} if $f(\sigma)\cap f(\tau)=\emptyset$ whenever $\sigma,\tau$ are disjoint simplices of $K$.

{\bf Theorem.} {\it Fix integers $d,k\ge2$ such that $d=\frac{3k}2+1$.
  %$n+2\le d\le \frac{3n}2+1$ (=)

(a) Assume that $P\ne NP$.
Then there exists a finite $k$-dimensional complex $K$ that does not admit an almost embedding in $\R^d$ but for which there exists an equivariant map $\t K\to S^{d-1}$.

(b) The algorithmic problem of recognition almost embeddability of finite $k$-dimensional complexes in $\R^d$ is NP hard.}

The proof is based on the technique from the Matou\v{s}ek-Tancer-Wagner paper (proving an analogous result for
embeddings), and on singular versions of the higher-dimensional {B}orromean rings lemma. The new part of our argument is a stronger `almost embeddings' version of the generalized van Kampen--Flores theorem.
\end{abstract}

\tableofcontents

%\ifabstr
%\newpage
%\fi

\section{Introduction}

In this paper we study almost embeddings and equivariant maps of configuration spaces.
They appear in studies of embeddings \cite{FKT, Sk08} as well as in topological
combinatorics (for Tverberg-type problems see \cite{BZ, BBZ, Sk16}).
%\red{TODO: find more references, Matousek, Zivaljevic, Vrecica, Ziegler, Blagojevic, Kozlov?}).
Almost embeddings also turned out to be a useful tool for studying Helly-type
results on convex sets, implicitly in~\cite{Ma97} and explicitly
in~\cite{GoPaPaTaWa15}. See definitions and more motivations below.

Throughout this paper, let $K$ be a finite simplicial complex.

A map  $f\colon |K|\to \R^d$ is an {\bf almost embedding} if $f(\sigma)\cap f(\tau)=\emptyset$
whenever $\sigma,\tau$ are disjoint simplices of $K$.
(Existence of an almost embedding is obviously a necessary condition for existence of an embedding.)

%\paragraph{Definitions of deleted product $\t K$ and the Gauss map $\t f$.}
The {\bf (simplicial) deleted product} of $K$ is
\[
\t K:= \cup \{ \sigma\times\tau\ : \sigma,\tau \textrm{ are  simplices of }K,\
\sigma\cap\tau=\emptyset\};
\]
i.e., $\t K$ is the union of products $\sigma\times\tau$ formed by disjoint simplices of $K$.

Suppose that $f:|K|\to\R^d$ is an almost embedding.
Then the map $\t f:\t K\to S^{d-1}$ is well-defined by the Gauss formula
$$\t f(x,y)=\frac{f(x)-f(y)}{|f(x)-f(y)|}.$$
We have $\t f(y,x)=-\t f(x,y)$;
i.e., this map is {\bf equivariant} with respect to the `exchanging factors' involution
$(x,y)\to(y,x)$ on $\t K$ and the antipodal involution on $S^{d-1}$.
Thus the existence of an equivariant map $\t K\to S^{d-1}$ is a
necessary condition for almost embeddability of $|K|$ in $\R^d$.

\begin{theorem}\label{t:nphard}
Fix integers $d,k\ge2$ such that $d=\frac{3k}2+1$.

%\begin{corollary}\label{t:aeem}
\begin{itemize}
  \item[$(a)$] Assume that $P\ne NP$.
Then there exists a finite $k$-dimensional complex $K$ that does not admit an almost embedding in $\R^d$ but for which there exists an equivariant map $\t K\to S^{d-1}$.

\item[$(b)$] The algorithmic problem of recognition almost embeddability of finite $k$-dimensional complexes in
$\R^d$ is NP hard.
\end{itemize}
\end{theorem}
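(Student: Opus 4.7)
The plan is to adapt the Matou\v{s}ek-Tancer-Wagner hardness-of-embeddability strategy to the almost-embedding setting. Specifically, I would construct a polynomial-time reduction from a suitable NP-complete problem (such as $3$-SAT) to the recognition of almost embeddability of $k$-complexes in $\R^d$. For each Boolean formula $\phi$, one produces a finite $k$-dimensional complex $K_\phi$ of polynomial size with the property that $\phi$ is satisfiable if and only if $K_\phi$ admits an almost embedding in $\R^d = \R^{3k/2+1}$. Part~(b) follows immediately.

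The complex $K_\phi$ is assembled from two kinds of gadgets. A variable gadget admits two essentially different almost-embeddings (corresponding to ``true'' and ``false''); a clause gadget creates an unavoidable topological obstruction unless at least one incident variable gadget is placed in a satisfying state. The obstruction is modeled on a higher-dimensional singular Borromean configuration: three $k$-cycles in $\R^d$ that pairwise unlink but cannot be simultaneously pulled apart. The dimension $d = 3k/2+1$ is exactly borderline for realising such a $3$-component linking by an almost embedding, and each unsatisfied clause in $\phi$ is engineered to produce such a forbidden Borromean triple in any would-be almost embedding of $K_\phi$.

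The central obstacle is that these obstructions must remain valid when single simplices are permitted to self-intersect. Many classical non-embeddability arguments implicitly exploit the absence of self-intersections and so do not carry over to almost embeddings. The decisive new ingredient, flagged in the abstract, is a stronger ``almost embeddings'' version of the generalised van Kampen-Flores theorem: certain skeleta of joins of simplices do not admit almost embeddings in $\R^d$ when $d = 3k/2+1$. I would prove this equivariantly: any almost embedding induces an equivariant map on the deleted product (or, for the generalised version, on a higher-fold deleted join carrying the appropriate symmetric-group action), whose non-existence at these parameters follows from a Borsuk-Ulam or cohomological index computation. Combining this with the singular Borromean rings lemma closes the unsatisfiable-implies-non-almost-embeddable direction of the reduction.

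Part~(a) is then deduced from~(b) by a separation-of-complexity-classes argument. Existence of an equivariant map $\t K\to S^{d-1}$ is decidable in polynomial time via computation of the primary equivariant obstruction (which suffices in this dimension range, possibly supplemented by finitely many higher obstructions that are also algorithmically accessible). If every $k$-complex $K$ that lacks an almost embedding in $\R^d$ also lacked an equivariant map $\t K\to S^{d-1}$, then recognition of almost embeddability would coincide with the equivariant map criterion and would lie in $P$, contradicting the NP-hardness in~(b) under the hypothesis $P\ne NP$. Hence at least one complex $K$ must admit an equivariant map $\t K\to S^{d-1}$ while not admitting any almost embedding in $\R^d$, as required.
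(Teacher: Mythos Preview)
Your overall architecture matches the paper's: reduce from $3$-SAT via clause gadgets, use a singular Borromean rings lemma together with an almost-embedding van Kampen--Flores statement for the ``only if'' direction, and deduce (a) from (b) by invoking the polynomial-time decidability of the equivariant-map criterion (the paper cites \v{C}adek--Kr\v{c}\'al--Vok\v{r}\'{\i}nek for this). That part is fine.

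There is, however, a genuine gap in how you formulate the key lemma. You state the needed van Kampen--Flores analogue as ``certain skeleta of joins of simplices do not admit almost embeddings in $\R^d$,'' and you propose to prove it by showing there is no equivariant map from the deleted product (or join). That statement is too weak to drive the reduction. The clause gadget $G$ is obtained from a complex $F$ by removing three top simplices $\sigma_1,\sigma_2,\sigma_3$; an almost embedding $f$ of $G$ extends only to a general-position PL \emph{map} $g\colon|F|\to\R^{k+\ell+1}$, not to an almost embedding of $F$. Knowing merely that $F$ does not almost embed (equivalently, that no equivariant map $\widetilde F\to S^{d-1}$ exists) tells you nothing about such a $g$. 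What is actually required is the parity statement $v(g)=1$ for \emph{every} general-position PL map $g$ of $F$, so that the odd intersection count must be absorbed by one of the three pairs $(\sigma_i,S_i)$, yielding the linking needed for the Borromean step. The paper proves this by an elementary ``van Kampen number'' argument: one checks a local combinatorial parity condition on $F$ guaranteeing that $v(f)$ is homotopy-invariant, then computes $v=1$ on a single convenient map. Your equivariant/index route could in principle recover this (the primary obstruction is essentially the van Kampen class), but as you phrased it, it does not.

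Two smaller inaccuracies worth fixing. First, the Borromean obstruction in this setting is not ``three $k$-cycles that pairwise unlink''; it involves a $2\ell$-torus $T=S^\ell\times S^\ell$ together with two $k$-spheres, with prescribed linking/unlinking against the meridian and parallel of $T$. Second, the obstruction is attached to each \emph{pair of conflicting literals} (via a torus glued along the two corresponding boundary spheres), not to each clause; there are no separate variable gadgets in the paper's construction.
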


The reader need not to know what NP hardness is: the  essence of part (b) is explained by Theorem \ref{t:kphi} below.

For $k=2$ part (a) is true even without $P\ne NP$ assumption, by \cite[Theorem 1.5 and Proposition 1.7]{AMSW}.
Part (a) follows by part (b) and the existence of a polynomial algorithm for checking the existence of equivariant maps \cite{CKV}.
Indeed, for fixed $d,k$
%in part (b) as fixed parameters,
it is polynomial time decidable whether there exists an equivariant map $\t K\to S^{d-1}$~\cite{CKV}.
Given that almost embeddabilty implies the existence of an equivariant map, part (b) implies part (a).

We discuss the possibility of removing the assumption $P\ne NP$
%obstacles to this
at the end of the introduction.

\begin{remark*}
%Let $d \geq 4$ be such that $d \equiv 1 \pmod 3$. Then the conclusions of Theorem~\ref{t:nphard} are in
%fact valid if we replace $k$ with any $k'$ with $d \geq  k' \geq \frac{2(d-1)}3$.
The conclusions of Theorem~\ref{t:nphard} are in fact valid for each fixed integers $k,d$ such that
$2\le k\le d\le \frac{3k}2+1$ and $d\equiv 1 \pmod 3$.
That is, we reflect only the interesting extremal cases in the statement of Theorem~\ref{t:nphard}.

Indeed, for such integers $k,d$ let $k' := \frac{2(d-1)}3$.
For a proof of  part (a) with $(d, k)$ we take the complex $K'$ from part (a) with parameters $(d,k')$, and define
the complex $K$ to be the disjoint union of $K'$ and a $k$-simplex.
Similarly, for a proof of  part (b) with $(d, k)$ we add an isolated $k$-simplex to every $k'$-complex.
In both cases it is easy to check that the conclusion of Theorem~\ref{t:nphard} remains valid as for $d\ge k$ adding an isolated $k$-simplex does not affect neither almost embeddability to $\R^d$ nor the existence
of an equivariant map to $S^{d-1}$.
%Indeed, let $k := \frac{2(d-1)}3$ as in the statement of the theorem and let
%  $K$ be the complex from part (a) with parameters $(d,k)$. For a proof of
%  part (a) with $(d, k')$ we consider the complex $K'$ which is the disjoint
%  union of $K$ and a $k'$-simplex. Similarly, for part (b) with $(d, k')$ we
%  add an isolated $k'$-simplex to every complex that appear in the NP-hardness
%  reduction, that is, to every complex $K(\Phi)$ in the statement of
%  Theorem~\ref{t:kphi} below. In both cases it is easy to check that the
%  statement of Theorem~\ref{t:nphard} remains valid as adding an isolated
%  $k'$-simplex does not affect neither almost embeddability nor the existence of an equivariant map.
\end{remark*}

\paragraph{Motivation and background.}
A classical question in topology is to determine whether a simplicial complex $K$ embeds (topologically/piecewise linearly/linearly) in $\R^d$.
It is easy to deduce that every $k$-dimensional simplicial complex embeds (even
  linearly) into $\R^{2k+1}$.  Pioneering result in this area, known as the van
  Kampen--Flores theorem \cite{VK, Fl32, Sk14}, states the existence of
  $k$-dimensional complexes that to do not embed into $\R^{2k}$ (for every
  integer $k$; even topologically).

In general, it is often very hard to determine whether a given complex $K$ embeds into $\R^d$.
More precisely, this question subtly depends on the
comparison of $k:=\dim K$ and $d$.
For example, it is algorithmically undecidable to recognize whether a given $(d-1)$-complex embeds into $\R^d$~\cite{MaTaWa11}, provided that $d \geq 5$.
(This result follows from a celebrated theorem of Novikov on unrecognizability of the $d$-sphere~\cite{VoKuFo74}.)

Matou\v{s}ek, the second author and Wagner~\cite{MaTaWa11} proved that for each pair $(k,d)$ such that
$4\le d\le \frac{3n}2+1$ it is NP hard to decide whether a $k$-dimensional simplicial complex PL embeds in $\R^d$.
  Theorem~\ref{t:nphard}(b) is a version of this result for almost embeddability.

We describe the method from~\cite{MaTaWa11} in detail (in order to prove our main results), up to one step in proof that we take directly from~\cite{MaTaWa11}.
We explicitly state the initial step of the proof (Theorem~\ref{t:kphi} below).
Next, we slightly simplify the main construction (construction of $K(\Phi)$ in \S\ref{s:pr}).
We also present a simple proof of Lemma \ref{l:odd_obstruction} below generalizing the van Kampen-Flores theorem (which is also one of the key tools for the result).
For $\ell=k-1$ this lemma is proved in \cite{VK}, for $\ell<k-1$ a weaker version of this lemma
(when $f|_{S_1}$ is a PL embedding) is proved in \cite[Lemma~1.4]{SeSp92} using the Smith index.
Thus this paper can serve as an exposition of the proof of the above result of~\cite{MaTaWa11}.

Theorem~\ref{t:nphard}(b) is interesting on its own because almost embeddability is different from embeddability.
Consider the following three properties of a finite simplicial complex $K$.

(E) $K$ PL embeds into $\R^d$.

(AE) $K$ PL almost embeds in $\R^d$.

(EM) There exists an equivariant map $\t K\to S^{d-1}$.

The conditions (AE) and (EM) appeared as `combinatorial' or `algebraic' counterparts of (E), useful to study `geometric' condition (E).
Theorem \ref{t:nphard} indicates that the condition (AE) is closer to (E) than to (EM), from algorithmic point of view.
More precisely, we have
%the following implications and non-implications
$$\xymatrix{ E \ar@{=>}[r]_{} & AE \ar@{=>}[r]_{}  & EM \ar@{=>}@(dl,dr)[ll]^{2d\ge3k+3\text{ or }d\le2}}$$
Here the straight arrows are clear and explained above, and the curved arrow is
a theorem of Weber~\cite{We67}; see also \cite[\S5]{Sk08}.
For every pair $(k,d)$ such that `$2d\ge3k+3$ or $d\le2$' does not hold, i.e. such that  $3\le d\le \frac{3k}2+1$, we have $(EM)\not\Rightarrow(E)$ \cite{SeSp92}, \cite{FKT}, \cite{SSS}, \cite{GS06}.
Moreover, for every pair $(k,d)$ such that $4\le d\le \frac{3k}2+1$
we have $(AE)\not\Rightarrow(E)$ \cite{SeSp92}, \cite{SSS}.
See \cite[\S5, \S7]{Sk08} for a survey.
%(i.e. outside the `metastable range' and with $d\geq3$) (AE) does not imply (E), and so (EM) does not imply (E)
By \cite[Theorem 1.5 and Proposition 1.7]{AMSW} $(EM)\not\Rightarrow(AE)$ for $d=2k=4$.
%Theorem~\ref{t:nphard}(a) for $d=2k=4$ and without the assumption that $P\ne NP$ was proved in
Theorem \ref{t:nphard}(a) shows (modulo $P\ne NP$) that for every pair $(k,d)$ such that $4\le d=\frac{3k}2+1$
$(EM)\not\Rightarrow(AE)$.
%(EM) does not imply (AE)
% is the first step in covering a gap between almost embeddings and equivariant maps.

%From a different point of view, 
It might be also interesting to compare algorithmic complexity (of embeddability and almost embeddability) with the `geometric' \emph{refinement complexity} introduced in~\cite{FK}.
Although it is not directly related to the results in this paper, let us
also consider embeddability and almost embeddability of $k$-complexes to $\R^{2k}$ when $k \geq 3$. 
Then there is a quite noticeable gap between the two complexities for embeddability, which is polynomial time solvable whereas the refinement complexity grows exponentially~\cite{FK}. 
V. Krushkal kindly informed us that this gap is also present for almost
embeddability by generalizations of Proposition~4.1 and `Proof of the bound
(4.1) on refinement complexity' from~\cite{FK}.\footnote{However, it is not an
aim of this paper to provide the details.}

\paragraph{Proof technique.}
We prove Theorem~\ref{t:nphard} by applying the technique of \cite{MaTaWa11}
(which builds on a construction\footnote{Let us emphasize that~\cite{SeSp92},
\cite{FKT}, \cite{SSS}, and~\cite{MaTaWa11} slightly differ in technical
details. In particular, the examples in~\cite{SeSp92} and~\cite{SSS} were built with the aim to
be almost-embeddable, thus we could not use them immediately without a
modification.} in~\cite{SeSp92}, \cite{FKT}, \cite{SSS}; see \cite[\S5, \S7]{Sk08} for a survey) and
the Singular Borromean Rings Lemma \ref{l:bor} \cite{AMSW}.
The new part of our argument is the `almost embeddings' version (Lemma \ref{l:odd_obstruction}) of the generalized van Kampen--Flores theorem.
That is, of~\cite[Lemma 1.4]{SeSp92}, \cite[Lemma 6]{FKT},
\cite[Lemma 1.1]{SSS},~\cite[Lemma 7.2]{Sk08}, and~\cite[Lemmas~4.1 and ~5.1(i)]{MaTaWa11}.
Our version is stronger because we do not assume that $f|_{S_j}$ is an embedding.
In spite of this, our proof (presented in Section~\ref{s:elem}) is simpler, cf.  \cite[Remark at the end of 5.1]{MaTaWa11}.

Let us emphasize that this passage from embeddability to almost embeddability, being not hard, is not entirely trivial.
Although new proofs of `almost embeddings' analogues of main lemmas are simpler, they require certain change of the viewpoint.
The Borromean Rings Lemma \ref{l:bor} for almost embeddings is only proved for $k=2l$ \cite{AMSW},
not for $k\ge 2l$ as for embeddings.
Also recall that almost embeddability does not in general imply embeddability \cite{SeSp92, SSS}, cf. `Motivation and background'. 

\medskip
Formally, Theorem~\ref{t:nphard} follows by NP-hardness of recognition of 3-SAT problem and the following result.

A {\bf $3$-CNF formula} in variables $x_1, \dots, x_n$ is
$$\Phi(x_1,\ldots,x_n)=\bigwedge\limits_{s=1}^t
(x_{n_{s1}}^{\alpha_{s1}}\vee x_{n_{s2}}^{\alpha_{s2}}\vee x_{n_{s3}}^{\alpha_{s3}}).$$
Here $n_{si}\in\{1,\ldots,n\}$, $\alpha_{si}\in\{0,1\}$ and $x^0=\neg x$, $x^1=x$.
%\overline

\begin{theorem}\label{t:kphi}
Let $d,k\ge2$ be fixed integers such that $d=\frac{3k}2+1$.
Then to each $3$-CNF formula $\Phi$ there corresponds, by a polynomial algorithm (in the size of the formula $\Phi$), a finite $k$-dimensional complex $K(\Phi)$ such that $K(\Phi)$ is almost embeddable in $\R^d$ if and only if $\Phi$ is satisfiable (i.e., if the Boolean function $\Z_2^n\to\Z_2$ corresponding to $\Phi$ is not identically zero).
\end{theorem}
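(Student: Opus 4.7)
}
The plan is to adapt the Matou\v{s}ek--Tancer--Wagner gadget construction from \cite{MaTaWa11}, replacing the (PL) embeddability obstructions by their almost-embeddability counterparts. Given a $3$-CNF $\Phi$ with variables $x_1,\dots,x_n$ and clauses $C_1,\dots,C_t$, the complex $K(\Phi)$ will be assembled from a \emph{base complex} $B$, a \emph{variable gadget} $V_i$ for each $x_i$, and a \emph{clause gadget} $W_s$ for each $C_s$, all attached along a common frame in such a way that the total number of simplices is polynomial in $n+t$. The base complex $B$ is a wedge of many copies of a $k$-dimensional van Kampen--Flores type complex; each copy has exactly two ``natural'' almost-embeddings in $\R^d$, and the almost-embedding of $V_i$ determines a Boolean value $\varepsilon_i\in\{0,1\}$ interpreted as the truth value assigned to $x_i$. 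The clause gadget $W_s$ is a singular higher-dimensional Borromean-rings configuration (as in Lemma~\ref{l:bor}) with three ``feet'' attached to $V_{n_{s1}}$, $V_{n_{s2}}$, $V_{n_{s3}}$ in such a way that it can be almost-embedded if and only if at least one of the feet is placed in the side corresponding to $\alpha_{si}$.

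For the easy direction, I would show that if $\Phi$ is satisfiable then a satisfying assignment produces an explicit almost-embedding: first embed the base $B$ by choosing, for each $i$, the almost-embedding of $V_i$ that encodes the value of $x_i$; then, because every clause is satisfied, each clause gadget $W_s$ has at least one foot in the ``correct'' position and can be extended to an almost-embedding of the whole $W_s$ by an explicit Borromean-rings style realization. The choices for different clauses can be made in pairwise disjoint regions of $\R^d$, so no new intersections of disjoint simplices are created.

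The hard direction is the converse. Suppose $f\colon |K(\Phi)|\to\R^d$ is an almost-embedding. The idea is to extract an assignment $\varepsilon_i$ from the behavior of $f$ on $V_i$, show that this assignment is well-defined, and derive a contradiction if some clause is unsatisfied. This is where the new input enters: for each $V_i$ our stronger Lemma~\ref{l:odd_obstruction} (the almost-embeddings generalization of van Kampen--Flores) provides a $\Z_2$-invariant $\varepsilon_i(f)\in\{0,1\}$ counting parity of certain linking or intersection numbers of disjoint subcomplexes of $V_i$ under $f$; this does not require any restriction of $f$ to be an embedding, which is exactly the point where previous proofs needed to assume more. Then, for any unsatisfied clause $C_s$, one shows that the three feet of $W_s$ realize a singular Borromean-rings configuration whose three linking invariants are forced, by the $\varepsilon_i(f)$'s, to be all $1$, contradicting the Singular Borromean Rings Lemma~\ref{l:bor}.

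The main obstacle will be step three: arranging the attachments of $V_i$ and $W_s$ so that the local parity invariants extracted from $V_i$ really do coincide with the ``labels'' read by the feet of each $W_s$, and so that the Borromean-rings obstruction applies cleanly, with no interference from other clause gadgets or from non-embedding behavior of $f$ on the base. This is precisely the combinatorial bookkeeping of \cite{MaTaWa11}, which I would carry over essentially unchanged except for the replacement of embedding-based linking numbers with their almost-embedding versions guaranteed by Lemma~\ref{l:odd_obstruction}; the polynomial-time bound on $|K(\Phi)|$ is then immediate from the fact that the size of each $V_i$ and $W_s$ is bounded by a constant depending only on $k$ and $d$.
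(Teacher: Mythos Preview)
Your plan misidentifies the architecture of the construction in \cite{MaTaWa11}, and hence of $K(\Phi)$ here. There are \emph{no variable gadgets} in either construction. The complex $K(\Phi)$ is built only from clause gadgets $G_1,\dots,G_t$ (each a copy of the single complex $G=F-\sigma_1-\sigma_2-\sigma_3$) together with tori $T_{qr}$, one for every pair $(q,r)$ of conflicting literal \emph{occurrences} across different clauses; each torus is glued along its meridian and parallel to the boundaries $\partial\sigma_q$ and $\partial\sigma_r$ sitting in two distinct clause gadgets.

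Accordingly, the two key lemmas are deployed differently from what you describe. Lemma~\ref{l:odd_obstruction} cannot extract a binary value from a variable gadget --- it asserts $v(f)=1$ unconditionally, so it encodes no choice. Instead it feeds into Lemma~\ref{l:5.1}, which says that for any almost embedding of the \emph{clause} gadget $G$, at least one of the three $\ell$-spheres $\partial\sigma_i$ is linked mod~$2$ with its complementary $k$-sphere $S_i$. This selects one literal per clause, and the candidate assignment is read off from these selections. If $\Phi$ is unsatisfiable, two of the selected literal occurrences conflict, say at positions $q$ and $r$; the Singular Borromean Rings Lemma~\ref{l:bor} is then applied not inside a single clause gadget but to the triple $S_q\sqcup S_r\sqcup T_{qr}$ spanning \emph{two} clause gadgets, with the torus $T_{qr}$ inserted precisely so that the linking hypotheses (b) and (c) of Lemma~\ref{l:bor} are forced. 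Your picture of a clause gadget with ``three feet'' whose linking numbers are ``all forced to be $1$'' does not match Lemma~\ref{l:bor}, which concerns one torus and two spheres with a mixed linked/unlinked pattern.

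So the gap is structural rather than technical: the reduction you sketch (base complex, variable gadgets with two canonical almost-embeddings, and Borromean clause gadgets reading those choices) is not the one used, and it is unclear how to make it work, since an almost embedding of a fixed subcomplex does not in any evident way force a discrete $\{0,1\}$ choice that downstream gadgets can detect.
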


The analogue of Theorem \ref{t:kphi} for embeddability is proved in~\cite[\S4, \S5]{MaTaWa11}.
Our complex $K(\Phi)$ slightly differs from the complex constructed in~\cite[\S4.2, \S5.2]{MaTaWa11},
which we call $K'(\Phi)$.%
\footnote{We make this minor change to simplify the construction of $K(\Phi)$ and proof of the `only if' part,
which, however, would work for $K'(\Phi)$ as well.}
%The description of the squeezed torus is quite technical and therefore we avoid it here.
For the `if' part of Theorem~\ref{t:kphi} we need the following lemma.

\begin{lemma}[proof is sketched in  Section \ref{s:pr}]\label{l:comp}
The complex $K(\Phi)$ constructed in Section \ref{s:pr} is obtained from the complex $K'(\Phi)$ constructed
in~\cite[\S4.2, \S5.2]{MaTaWa11} by several contractions of edges and several compressions of $S^{\ell-1}\times I$.
% for some $\ell\le d/2$.
(Compression of a subcomplex identified with $S^{\ell-1}\times I$ is contracting to a point each segment $x\times I$, $x\in S^{\ell-1}$.)
\end{lemma}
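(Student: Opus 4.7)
The plan is to proceed by a direct combinatorial comparison between the two constructions. First, I would lay out the construction of $K'(\Phi)$ from \cite[\S4.2, \S5.2]{MaTaWa11} gadget by gadget: the variable gadgets, the clause gadgets, and the wires/connectors used to assemble them, together with the auxiliary subcomplexes (typically cones, small spheres $S^{\ell-1}$, and cylinders $S^{\ell-1}\times I$) that MaTaWa11 employs as gluing collars between the gadgets. In parallel, I would lay out the construction of $K(\Phi)$ as given in Section \ref{s:pr}, identifying for each piece of $K(\Phi)$ the corresponding piece (or pieces) of $K'(\Phi)$ it came from.

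Next, I would classify each simplification made in passing from $K'(\Phi)$ to $K(\Phi)$ as either an edge contraction or a compression of an $S^{\ell-1}\times I$ subcomplex. Whenever $K'(\Phi)$ uses a cylindrical collar $S^{\ell-1}\times I$ to identify a sphere on one gadget with a sphere on another, $K(\Phi)$ simply uses a single copy of $S^{\ell-1}$; this discrepancy is exactly resolved by the compression defined in the lemma statement, since contracting every fiber $x\times I$ of the collar identifies $S^{\ell-1}\times\{0\}$ with $S^{\ell-1}\times\{1\}$. Wherever $K'(\Phi)$ includes an auxiliary vertex joined by a single edge to another vertex that $K(\Phi)$ treats as one point (typically at endpoints of wires or where a variable output meets a clause literal), the corresponding simplification is an edge contraction.

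Once each elementary discrepancy is listed and classified, the lemma follows by composing the relevant edge contractions and cylinder compressions in any convenient order; both operations are cellular quotients and commute up to reindexing of cells, so no subtle ordering issue arises. The only thing left is to check that at the end of this sequence of quotients the resulting complex is isomorphic to the one built in Section \ref{s:pr}, which is a finite verification on the level of simplices and incidences.

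The main obstacle is purely bookkeeping: one must be sure that \emph{every} difference between $K(\Phi)$ and $K'(\Phi)$ is covered by these two kinds of elementary moves, and that no further alterations (extra simplices, changes of dimension of a gadget, or rewired attaching maps) have been introduced. This is precisely the reason the lemma is stated with a sketched rather than fully detailed proof: the combinatorial correspondence is routine but tedious, and the only use of the lemma is to carry an almost embedding of $K'(\Phi)$ to one of $K(\Phi)$, which is automatic since both edge contractions and compressions of $S^{\ell-1}\times I$ are quotient maps that can be $C^0$-approximated by simplicial maps collapsing the identified cells to a point, preserving the disjointness condition in the definition of almost embedding.
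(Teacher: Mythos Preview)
Your overall strategy---lay out both constructions side by side and classify each discrepancy as an edge contraction or a cylinder compression---is exactly what the paper does. However, your guesses about \emph{where} those moves occur do not match the actual constructions, and this matters because the lemma is only a sketch if the specific locations are named correctly.

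There are no ``variable gadgets'' and no ``wires/connectors'' in either $K(\Phi)$ or $K'(\Phi)$: both complexes are assembled only from clause gadgets and conflict gadgets (tori or their analogues), glued directly along the boundary spheres $\partial\sigma_q$. The two actual differences the paper identifies are: (i) in $K'(\Phi)$ the conflict gadget is not the torus $T$ but a polyhedron $X$ containing one extra edge whose contraction yields $T$; and (ii) in $K'(\Phi)$ one removes from the clause gadget not the full simplices $\sigma_i$ but smaller disks $\omega_i\subset\Int\sigma_i$, so that the annular regions $\sigma_i\setminus\Int\omega_i\cong S^{\ell-1}\times I$ remain and must be compressed. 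So the compressions are internal to each clause gadget, not collars between gadgets, and the edge contractions are internal to each conflict gadget, not at wire endpoints.

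One further small point: the lemma is used in the paper to transfer a \emph{PL embedding} (not merely an almost embedding) from $K'(\Phi)$ to $K(\Phi)$, via Cohen's theorem on quotients with collapsible point-inverses; your closing remark about preserving the almost-embedding condition via $C^0$-approximation is not the mechanism the paper relies on.
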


\begin{proof}[Proof of Theorem~\ref{t:kphi}: the `if' part]
Since the formula $\Phi$ is satisfiable, the complex $K'(\Phi)$ PL embeds into $\R^d$ \cite[Section 5]{MaTaWa11}.%
\footnote{The converse is also true but is not used here.}
Recall that the quotient of a PL manifold by a map with collapsible point-inverses is PL homeomorphic to the same manifold \cite{Co67}.
Hence contracting an edge and
%Since $\ell\le d/2$, every two PL embeddings $S^{\ell-1}\times I\to\R^d$ are PL isotopic \cite{?}.
compressing $S^{\ell-1}\times I$ keep PL embeddability.%
\footnote{The analogous claim is not true for `decontractions'.}
Thus by Lemma \ref{l:comp} $K(\Phi)$ also PL embeds into $\R^d$.
\end{proof}

Therefore it suffices to prove the converse: if $K(\Phi)$ almost-embeds in $\R^d$, then $\Phi$ is satisfiable.
This is a strengthening of the analogous fact from~\cite{MaTaWa11}.
We use the same idea as~\cite{MaTaWa11} but need to replace two key lemmas~\cite[Lemmas~5.1(i) and~5.3(i)]{MaTaWa11}
by suitable analogues for almost embeddings.
These analogues are Lemma~\ref{l:5.1} and the Singular Borromean Rings Lemma \ref{l:bor} \cite[Lemma 1.9]{AMSW}
below, respectively.

Let  $T:=S^\ell\times S^\ell$ be the $2\ell$-dimensional torus with meridian $a:=S^\ell\times \cdot$ and parallel $b:=\cdot\times S^\ell$.
See well-known definition of `linked modulo 2', e.g.,
in~\cite[\S77]{SeTh80} or in~\cite[\S2.2 `Linking modulo 2']{Sk}.

\begin{lemma}[Singular Borromean Rings]\label{l:bor}
  For each $k=2\ell$ let $S^k_a$ and $S^k_b$ be copies of $S^k$.
  Then there is no PL map $f\colon T\sqcup S^k_a\sqcup S^k_b\to \R^{k+\ell+1}$
  such that

  (a) the $f$-images of the components are pairwise disjoint;

  (b) $f(S^k_a)$ is linked modulo 2 with $f(a)$ and is not linked modulo 2 with $f(b)$;
%and

  (c) $f(S^k_b)$ is linked modulo 2 with $f(b)$ and is not linked modulo 2 with $f(a)$.
  %Conditions (b) and (c) could be reformulated as
  %$\lk_{\Z_2} (f(S^n_p),f(p)) = \lk_{\Z_2} (f(S^n_m), f(m)) = 1$ and
  %$\lk_{\Z_2} (f(S^n_p),f(m)) = \lk_{\Z_2} (f(S^n_m), f(p)) = 0$. }
\end{lemma}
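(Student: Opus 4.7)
The plan is to suppose such a map $f$ exists and then compute a single mod-$2$ triple intersection number in two different ways, obtaining the contradictory values $1$ and $0$.

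After a small PL perturbation preserving disjointness of the three component images, I would choose generic PL $(2\ell+1)$-chains $F_a$, $F_b$, $G$ in $\R^{k+\ell+1}$ with $\partial F_a = f_*[S^k_a]$, $\partial F_b = f_*[S^k_b]$, and $\partial G = f_*[T]$; these exist by contractibility of $\R^{k+\ell+1}$. The quantity of interest is the mod-$2$ cardinality
\[ N := |F_a \cap F_b \cap f(T)|, \]
which has expected dimension $(2\ell+1)+(2\ell+1)+2\ell-2(3\ell+1)=0$.

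\textbf{First count (via the torus).} The preimages $\gamma_a := f^{-1}(F_a) \cap T$ and $\gamma_b := f^{-1}(F_b) \cap T$ are $\ell$-cycles in $T = S^\ell \times S^\ell$. The torus intersection products $[\gamma_a]\cdot[a]$ and $[\gamma_a]\cdot[b]$ equal the linking numbers $\mathrm{lk}_2(f(S^k_a),f(a))$ and $\mathrm{lk}_2(f(S^k_a),f(b))$, so by hypothesis (b) they are $1$ and $0$; hence $[\gamma_a]=[b]$ in $H_\ell(T;\Z_2)$. Symmetrically $[\gamma_b]=[a]$. Therefore, by general position,
\[ N \equiv [\gamma_a]\cdot[\gamma_b] = [b]\cdot[a] = 1 \pmod{2}. \]

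\textbf{Second count (via the filling $G$).} Since $F_a\cap F_b \cap G$ is a generic $1$-chain, its boundary has even cardinality; the Leibniz rule for intersections of generic PL chains then yields
\[ N \equiv |f(S^k_a)\cap F_b \cap G| + |F_a \cap f(S^k_b)\cap G| \pmod{2}. \]
For the first summand, $f_*[S^k_a]\cap F_b$ is an $\ell$-cycle, equal to the pushforward of the $\ell$-cycle $f^{-1}(F_b)\cap S^k_a$ in $S^{2\ell}$; since $H_\ell(S^{2\ell};\Z_2)=0$, this source cycle bounds in $S^{2\ell}$, and its $f$-pushforward lies in $f(S^k_a)$, hence is disjoint from $f(T)=\partial G$. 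So the linking of $f_*[S^k_a]\cap F_b$ with $f(T)$ vanishes, and the summand is $0$. The other summand vanishes by the same argument applied to $S^k_b$. Hence $N\equiv 0 \pmod{2}$, contradicting the first count.

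The main obstacle I foresee is the general-position and Leibniz bookkeeping when $f$ is merely PL rather than an embedding: I must ensure that all pairwise, triple, and preimage intersections have their expected dimensions, and that the chain Leibniz identity applies mod $2$. This should be handled by a standard PL perturbation argument. Since disjointness of $f(T)$, $f(S^k_a)$, $f(S^k_b)$ is an open condition, it is preserved by small perturbations of $f$ and of the auxiliary chains $F_a, F_b, G$; after such perturbations the chain-intersection formalism used above is routine.
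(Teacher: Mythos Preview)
The paper does not supply a proof of this lemma; it is merely quoted from \cite[Lemma~1.9]{AMSW} and used as a black box. So there is nothing in the paper to compare your argument against.

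That said, your proposal is the standard triple-intersection proof of Borromean-type non-linking and is essentially correct. A couple of points you may want to make explicit when writing it up. First, the identification of $N$ with $|\gamma_a\cap\gamma_b|$ when $f|_T$ is not injective: a point of $F_a\cap F_b$ in the image of $f|_T$ contributes its number of preimages to $(f_T)_*[T]$, and each such preimage lies in both $\gamma_a$ and $\gamma_b$, so the two counts agree mod~$2$. Second, in the second count you use $H_\ell(S^{2\ell};\Z_2)=0$; this needs $\ell\ge1$, which holds since $k=2\ell\ge2$ in the intended application. With these clarifications, and the routine PL general-position perturbations you already flag, the argument goes through.
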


%\subsection
\paragraph{On the assumption $P \neq NP$.}
%\label{ss:PNP}
A reader could expect that analyzing the algorithm in~\cite{CKV} and the proof of Theorem~\ref{t:nphard}(b) would yield a direct construction of an example of Theorem~\ref{t:nphard}(a), without the assumption $P\neq NP$.
{Here we discuss the difficulties that appear in this analysis.
%in removing the assumption $P \ne NP$ from the statement of Theorem~\ref{t:nphard}(a).

Let us consider a 3-CNF formula $\Phi$ and let $d$ and $k$ be fixed, $d = \frac{3k}2 + 1$.
If $\Phi$ is satisfiable, then $(AE)$ holds for $K(\Phi)$ by Theorem~\ref{t:kphi} and therefore $(EM)$ holds for
$K(\Phi)$ as well.

If $\Phi$ is not satisfiable, then $(AE)$ does not hold for $K(\Phi)$ by
Theorem~\ref{t:kphi} but we do not know whether $(EM)$ holds for $K(\Phi)$.
However, if $(EM)$ did not hold for every $\Phi$ which is not satisfiable, then
we would deduce that it is NP-hard to recognize whether a given simplicial
complex $K$ satisfies $(EM)$. On the other hand, this is a polynomial time
solvable problem due to~\cite{CKV}. This would be only possible if $P = NP$.

Thus, we have good reasons to expect that for any choice of $k$ and $d$ with $d
= \frac{3k}2 + 1$, there is a non-satisfiable formula $\Phi$ such that
$(EM)$ holds for $K(\Phi)$. Actually, we conjecture that $(EM)$ holds for $K(\Phi)$ for every $\Phi$.
%But we do not know how to prove this.

For a proof of Theorem~\ref{t:nphard}(a), without the assumption $P \neq NP$,
it would be fully sufficient to exhibit a single non-satisfiable $3$-CNF
formula $\Phi$ such that $(EM)$ holds for $K(\Phi)$. In fact, the construction in
Theorem~\ref{t:kphi} makes also sense for the simplest non-satisfiable $1$-CNF
formula $\Phin = x_1 \wedge \neg x_1$ (in the definition of the clause gadget
$G$ in Section~\ref{s:pr} only a single simplex is removed instead of three
simplices). Let $\Kn(k,d) := K(\Phin)$ for given
parameters $d$ and $k$ (here we want to emphasize the dependence on $k$ and
$d$). The complex $\Kn(2,4)$ is essentially the complex constructed by Freedman,
Krushkal and Teichner~\cite{FKT} (up to a minor modification), and we know that
$(EM)$ holds in this case (as we discussed below the statement of
Theorem~\ref{t:nphard}).

For few other values of $k$ and $d$ we could, in principle, run the algorithm
of~\cite{CKV} on $\Kn(k,d)$ (unfortunately, it is not implemented\footnote{That
is, the code is not written.}).
However, we do not know how to verify $(EM)$ for infinitely many values of $k$ and $d$ we
are interested in; the dependence of the algorithm from~\cite{CKV} on $k$ and
$d$ is somewhat complicated. The algorithm in~\cite{CKV} is based on
the obstruction theory. As far as we know there are no other tools developed,
besides the obstruction theory, that would allow us to verify $(EM)$ for our
examples `by hand'.

%Let us mention one more curious fact: Let $A$ be our algorithm that constructs
%$K(\Phi)$ for a given $3$-CNF formula $\Phi$. Let $E$ be the algorithm
%from~\cite{CKV}. The composition of these two algorithms $E \circ A$ answers
%$(EM)$ for $K(\Phi)$ in polynomial time (for fixed $k$ and $d$). From the
%discussion above, it follows that some $K(\Phi)$ is complex satisfying $(EM)$
%but not $(AE)$ if and only if the polynomial time algorithm $E \circ A$ does not solve the 3-SAT problem.

%\newpage
\section{Proof of Theorem~\ref{t:kphi}}\label{s:pr}

First we define building blocks for the complex $K(\Phi)$ (most importantly, clause gadgets) and prove their properties.

Take any integers $0\le \ell<k$.
We suppress dependence on $k, \ell$.
%In this section $d: = k +\ell + 1$.
%Let $k:=2\ell$ and $d:=3\ell+1$. In particular, $d = k +\ell + 1$.
%(Perhaps describe parameters $k$, $\ell$ and $d$ in higher generality?)
For an integer $n$ denote
$$[n]:=\{1, \ldots, n\}.$$
%We denote by the same letter a subset of a finite set and the corresponding simplex of a simplicial complex.?

\paragraph{Definition of an auxiliary complex $F$.}
Complex $F$ has the vertex set
\linebreak
$[k +\ell + 3]\cup\{p\}$.
The simplices are
\begin{itemize}
\item complete $k$-skeleton on $[k +\ell + 3]$, and
\item all the simplices of dimension at most $\ell + 1$ that contain $p$.
\end{itemize}
In other words,
$$F := \left(\ [k +\ell + 3]\cup \{p\}\ ,
\ {[k +\ell+3]\choose\le k+1}\cup\left\{\{p\}\cup\sigma\ :\ \sigma\in{[k +\ell+3]\choose\le\ell+1}\right\}\ \right).$$
Here ${n\choose \le m}$ is the set of all subsets of $[n]$ having at most $m$ elements.
% = F(k,\ell)

\paragraph{Definitions of $\sigma_j$, $S_j$ and  clause gadget $G$.}
In this definition $j$ is any element of $[3]$.

Set $\sigma_j$ to be the simplex with vertex set $\{p\}\cup[\ell+2]-\{j\}$.
Then $\sigma_1$, $\sigma_2$ and $\sigma_3$ are three $(\ell+1)$-simplices containing $p$.

Set $S_j$ to be the union of all $k$-simplices with vertices in $[k +\ell + 3]$ that do not intersect $\sigma_j$.
%=\sigma_j^*?$  subcomplex of $F$ consisting of all simplices that do not intersect $\sigma_j$ OR
Clearly, this union is homeomorphic to the $k$-sphere.%
\footnote{This was called a {\it complementary} sphere in \cite{MaTaWa11}.}

Finally, we define the \emph{clause gadget} as $G:=F-\sigma_1-\sigma_2-\sigma_3$.

%Take an $(\ell+1)$-disk $\omega_j$ in the interior of $\sigma_j$ (?called an \emph{opening},
%the $k$-sphere $S_j$ is called the \emph{complementary sphere} to? the opening $\omega_j$.)
%Let $G$ is obtained by removing from $|F|$ the interiors of the three disks $\omega_j$.
%Simplicial complex $G$ is obtained from $F$ by first subdividing the simplices $\sigma_j$ so that
%$\omega_j$ are simplices in this subdivision, and then removing all simplices $\omega_j$.

\begin{lemma}\label{l:5.1}
For any $\ell<k$ and general position PL almost-embedding $f:|G|\to \R^{k +\ell + 1}$ there is $i\in[3]$ such that $f(\partial\sigma_i)$ is linked modulo 2 with $f(S_i)$.
\end{lemma}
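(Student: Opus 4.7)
The plan is to argue by contradiction using the generalized van Kampen--Flores theorem (Lemma~\ref{l:odd_obstruction}), which I expect in its quantitative mod-2 form to state that for every general-position PL map $\bar{f}\colon F\to \R^{k+\ell+1}$ the total mod-2 intersection count $\sum_{\{\alpha,\beta\}} |\bar f(\alpha)\cap\bar f(\beta)|_2$, taken over all unordered pairs of disjoint simplices of $F$ with $\dim\alpha + \dim\beta = d$, is odd. Suppose for contradiction that $\mathrm{lk}_2(f(\partial\sigma_i), f(S_i)) = 0$ for every $i \in [3]$. For each $i$ I would choose an arbitrary general-position PL extension $\bar{f}_i\colon \sigma_i \to \R^d$ of $f|_{\partial\sigma_i}$, and let $\bar{f}\colon F \to \R^d$ be the resulting combined map, which extends $f$.

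A combinatorial identification separates the contributing disjoint simplex pairs of $F$ into two classes. Since the vertex set of $\sigma_i$ is $\{p\}\cup([\ell+2]\setminus\{i\})$ and $F$ contains no simplex of dimension exceeding $k$ that avoids $p$, every simplex $\tau$ of $F$ vertex-disjoint from $\sigma_i$ is contained in $\{i\}\cup[\ell+3,k+\ell+3]$ and is therefore a face of the $(k+1)$-simplex on that vertex set, i.e., a simplex of $S_i$. Moreover, any disjoint pair $(\alpha,\beta)$ with $\dim\alpha+\dim\beta=d$ in which neither simplex contains $p$ lies in the $k$-skeleton on $[k+\ell+3]$, which is a subcomplex of $G$. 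Consequently, the relevant pairs split into (i) pairs lying entirely in $G$ and (ii) pairs of the form $(\sigma_i,\tau)$ with $\tau$ a $k$-face of $S_i$.

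Pairs of type (i) contribute $0$ to the mod-2 total because $f|_G$ is an almost embedding. The total of type (ii) contributions is
\[
  \sum_{i=1}^3 |\bar{f}_i(\sigma_i)\cap f(S_i)|_2 \;=\; \sum_{i=1}^3 \mathrm{lk}_2\bigl(f(\partial\sigma_i), f(S_i)\bigr) \;=\; 0 \pmod{2},
\]
where the first equality is the definition of mod-2 linking via the bounding $(\ell+1)$-chain $\bar{f}_i(\sigma_i)$ (using also that in general position $\bar f_i(\sigma_i)$ misses the $(k-1)$-skeleton of $S_i$, so the intersection count with $f(S_i)$ equals the sum of intersection counts with the $k$-faces), and the second is the standing assumption. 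This contradicts the expected strong form of Lemma~\ref{l:odd_obstruction}, so some $\mathrm{lk}_2(f(\partial\sigma_i), f(S_i))$ must be nonzero.

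The main obstacle of the plan is ensuring that Lemma~\ref{l:odd_obstruction} is available in the exact quantitative mod-2 form required above. The weaker qualitative statement ``$F$ does not admit a PL almost embedding in $\R^{k+\ell+1}$'' would not suffice directly: it would force us to promote each mod-2 unlinking into an actual PL disk extension $\bar{f}_i\colon\sigma_i \to \R^d\setminus f(S_i)$ and thereby assemble a genuine almost embedding of $F$, contradicting the qualitative statement. Such a promotion is not automatic, since $\pi_\ell(\R^d\setminus f(S_i))\cong\mathbb{Z}$, and vanishing of the linking only modulo $2$ does not in itself suffice to null-homotope $f|_{\partial\sigma_i}$ in the complement of $f(S_i)$.
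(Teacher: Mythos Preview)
Your argument is essentially the paper's: extend $f$ to a general-position PL map $g\colon |F|\to\R^{k+\ell+1}$, note that the only disjoint dimension-$d$ pairs not already in $G$ are $(\sigma_i,\tau)$ with $\tau$ a $k$-face of $S_i$ (since $F\setminus G=\{\sigma_1,\sigma_2,\sigma_3\}$ and no two $\sigma_i$ are disjoint), and conclude $\sum_{i=1}^3 |g(\sigma_i)\cap g(S_i)|\equiv v(g)=1\in\Z_2$, so some summand---hence some linking number---is odd. Your worry about the form of Lemma~\ref{l:odd_obstruction} is unnecessary: the paper states it exactly as $v(f)=1\in\Z_2$ for every general-position PL map $f\colon|F|\to\R^{k+\ell+1}$, which is precisely the quantitative mod-2 statement your argument uses (so the contradiction framing and the fallback discussion in your last paragraph can be dropped).
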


For $k=2\ell=2$ Lemma \ref{l:5.1} is proved in \cite[proof of Lemma 8]{FKT}.
Cf. \cite[Remark 2.3.b]{AMSW}.

\begin{proof}[Proof of Theorem~\ref{t:kphi}: construction of $K(\Phi)$]
Recall the notation for $3$-CNF formula $\Phi$ given before Theorem~\ref{t:kphi}.
The  `multiple' $x_{n_{s1}}^{\alpha_{s1}}\vee x_{n_{s2}}^{\alpha_{s2}}\vee x_{n_{s3}}^{\alpha_{s3}}$ is the $s$-th \emph{clause} of $\Phi$.
The   `summand' $x_{n_{si}}^{\alpha_{si}}$  is the $i$-th \emph{literal} of the $s$-th clause.
Without loss of generality, we may assume that no clause (`multiple') contains both $x_m$ and $\neg x_m$ for some $m$
(otherwise such a clause would be redundant).
Denote
$$P:=\{(q,r)\in([t]\times[3])^2\ :\ n_q=n_r,\ \alpha_q=0\text{ and }\alpha_r=1\}.$$
This is the set of all pairs $(q,r)=((q_1,q_2),(r_1,r_2))$ such that for some $m$

$\bullet$ the $q_2$-th literal (`summand') of the $q_1$-th clause (`multiple') is $x_m$,  and

$\bullet$ the $r_2$-th literal (`summand') of the $r_1$-th clause (`multiple') is $\neg x_m$.

(In other words, this is the set of all pairs of (pairs of) indices of literals in conflict.)

Take copies $G_1,\ldots,G_t$ of (the clause gadget) $G$.
Denote by $\sigma_q=\sigma_{(q_1,q_2)}$ the simplex $\sigma_{q_2}$ in the copy $G_{q_1}$ .
Take a triangulation of $k$-torus $T$ extending triangulations of its meridian and parallel $a$ and $b$ as boundaries of $(\ell+1)$-simplices.
For each $(q,r)\in P$ take a copy $T_{qr}\supset a_{qr},b_{qr}$ of $T\supset a,b$.
Set
$$K(\Phi):=\cup_{s=1}^t G_s
\bigcup\limits_{\partial\sigma_q=a_{qr},\ \partial\sigma_r=b_{qr}, \ (q,r)\in P}
\cup_{(q,r)\in P} T_{qr}.$$
That is, this complex is obtained from the copies $G_s$ and $T_{qr}$, by identifying the $\ell$-spheres $\partial\sigma_q$ and   $a_{qr}$, and the $\ell$-spheres $\partial\sigma_r$ and $b_{qr}$, for each $(q,r)\in P$.

Recall that $k$ and $\ell$ are fixed.
Then each of the complexes $G$ and $T$ can be built by a constant-time algorithm.
Hence $K(\Phi)$ is obtained from $\Phi$ by a polynomial algorithm in $n$ and $t$ (i.e. in the size of the formula).
%Similarly, it has a polynomial size???.
\end{proof}

\begin{proof}[Sketch of a proof of Lemma \ref{l:comp}]
%Denote by $K'(\Phi)$ the complex constructed in~\cite[\S4.2, \S5.2]{MaTaWa11} (it was denoted $K(\Phi)$ there).
The construction of $K'(\Phi)$ \cite[\S4.2, \S5.2]{MaTaWa11} is different from the above construction of $K(\Phi)$
by the following details:

$\bullet$ the torus $T$ is replaced by a polyhedron
%`squeezed torus'? `lollipop'
$X$ containing `parallel' and `meridian' $a$ and $b$, and an edge whose contraction yields $T$;

$\bullet$ the simplices $\sigma_i$ in the definitions of $G$ and of $K(\Phi)$ are replaced by $k$-disks $\omega_i\subset\Int\sigma_i$.

Thus $K(\Phi)$ is obtained from $K'(\Phi)$ by contracting the edge in each copy of $T$, and by compressing $\sigma_j-\Int\omega_j\cong S^{\ell-1}$ in each copy of $G$ and for each $j\in[3]$.
\end{proof}

\begin{proof}[Proof of Theorem~\ref{t:kphi}: the `only if' part]
Assume that there is a PL almost embedding $f:|K(\Phi)|\to \R^{k +\ell + 1}$.
%We need to show that $\Phi\not\equiv0$.
%$\Phi$ is satisfiable. [meaning $\Phi\not\equiv1$ because this is in a dual form].
By Lemma~\ref{l:5.1} for every $s\in[t]$ there is $i(s)\in[3]$ such that
%for $q:=(s)$
the $f$-images of the spheres  $S_{si(s)},\partial\sigma_{si(s)}\subset |G_s|$ are linked modulo 2.
%By renumbering we may assume that $j(s)=1$ for every $s$.

Let us assume, for contradiction, that $\Phi$ is not satisfiable; that is,
  $\Phi \equiv 0$. (The reader not so familiar with literals, clauses and conflicts,
may wish to skip the next paragraph and check rather the footnote in the following one.)

%For $(s,i) \in [t] \times [3]$, let $\lambda_{si} :=x_{n_{si}}^{\alpha_{si}}$
%be the $i$-th literal of the $s$-th clause.
  The function $i$ selects one literal (`summand') in each clause (`multiple').
  Then two selected literals (`summands') must be in conflict.

This means that there are $q_1,r_1\in[t]$ and $m\in[n]$ such that the $i(q_1)$-th literal
  (`summand') of the $q_1$-th clause (`multiple') is $x_m$ and the $i(r_1)$-th literal
  (`summand') of the $r_1$-th clause (`multiple') is $\neg x_m$.%
\footnote{Indeed, in the opposite
  case for each $m\in [n]$ there is $\alpha(m)\in\{0,1\}$
  such that $\alpha_{si(s)}=\alpha(n_{si(s)})$ for each $s\in[t]$.
  So we can take $x_{n_{si(s)}}:=\alpha(n_{si(s)})$  for each $s\in[t]$ and
  then extend this to a satisfying assignment.}
That is, $(q,r):=((q_1,i(q_1)),(r_1,i(r_1)))\in P$.

Then $\partial\sigma_q=a_{qr}$ and $\partial\sigma_r=b_{qr}$.
Since $f$ is an almost embedding, the $f$-images of  $S_q$, $S_r$ and $T_{qr}$  are pairwise disjoint.
The $\ell$-sphere $b_{qr}$ bounds the disk $v*b_{qr}$ outside $S_q$, where $v$ is any vertex of $G_{r_1}$ outside $\partial \sigma_{r_1i}$ for each $i \in [3]$.
%The sphere $S_q$ bounds the disk $p_{q_1}*S_q$ outside $b_{qr}$, where $p_{q_1}$ is the
%distinguished vertex `$p$' of $G_{q_1}$.
Hence $f(S_q)$ is unlinked modulo 2 with $f(b_{qr})$.
Analogously $f(S_r)$ is unlinked modulo 2 with $f(a_{qr})$.
Since $k=2\ell$, all this contradicts the Singular Borromean Rings Lemma~\ref{l:bor}
applied to the restriction of $f$ to $S_q\sqcup S_r\sqcup T_{qr}$.
\end{proof}

\section{The van Kampen number: proof of Lemma~\ref{l:5.1}}\label{s:elem}

For a  general position PL map $f\colon |K|\to \R^d$  of a finite $k$-complex define {\it the van Kampen number}
$$v(f)\in\Z_2$$
to be the parity of the number of points $x\in \R^d$ such that $x\in f(\sigma) \cap f(\tau)$ for some disjoint simplices $\sigma, \tau \in  K$ with $\dim \sigma + \dim \tau = d$.
(For an exposition and another applications of the van Kampen number see
\cite{Sk17}, \cite[\S1]{Sk}.)

\begin{lemma}\label{l:pa}
Let $d$ be an integer and $K$ a finite complex such that for every pair $\sigma, \tau$ of disjoint $s$- and $t$-simplices in $K$ with $s+t=d-1$ the following two numbers have the same parity:

$\bullet$ the number of $(s+1)$-simplices $\nu$ containing $\sigma$ and disjoint with $\tau$;

$\bullet$ the number of $(t+1)$-simplices $\mu$ containing $\tau$ and disjoint with $\sigma$.

Then $v(f)$ is independent of a general position PL map $f\colon |K|\to \R^d$.
\end{lemma}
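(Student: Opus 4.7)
The plan is a standard cobordism-style argument. Given two general position PL maps $f_0, f_1 \colon |K| \to \R^d$, I would connect them by a PL homotopy $F \colon |K| \times I \to \R^d$ placed in general position, meaning that for every pair of disjoint simplices $\sigma, \tau$ of $K$ the intersection $F(\sigma \times I) \cap F(\tau \times I)$ has the expected codimension $d - \dim\sigma - \dim\tau$ in $\R^d$, and no three such intersections meet at a single point. Such an $F$ exists by a small PL perturbation. Writing $F_t := F(\cdot, t)$, for all but finitely many \emph{critical} times $t_0 \in I$ the slice $F_t$ is itself in general position and $v(F_t)$ is well-defined and locally constant. It therefore suffices to show that $v(F_t)$ does not change mod~$2$ across each critical $t_0$.

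Next I would classify the generic critical events. Exactly two types occur. In a \emph{tangency} event, two points of $F_t(\sigma) \cap F_t(\tau)$ for a single pair of disjoint simplices with $\dim\sigma + \dim\tau = d$ are simultaneously created or annihilated; the cardinality changes by $\pm 2$, so $v$ is unchanged mod~$2$. In a \emph{boundary-crossing} event, there are disjoint simplices $\alpha, \beta$ of $K$ of dimensions $s$ and $t$ with $s + t = d - 1$ and an isolated transverse point $z \in F_{t_0}(\alpha) \cap F_{t_0}(\beta)$. For each $(s+1)$-simplex $\nu \supset \alpha$ of $K$ with $\nu \cap \beta = \emptyset$, a single point of $F_t(\nu) \cap F_t(\beta)$ enters or leaves through the face $F_t(\alpha) \subset F_t(\partial\nu)$, contributing $1 \pmod{2}$ to the change of $|F_t(\nu) \cap F_t(\beta)|$; symmetrically for each $(t+1)$-simplex $\mu \supset \beta$ with $\mu \cap \alpha = \emptyset$. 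Summing, the mod~$2$ change of $v(F_t)$ at $t_0$ equals the sum of the two counts appearing in the hypothesis, which by assumption is zero.

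The main obstacle I expect is the careful PL general position bookkeeping. One has to verify that a small PL perturbation really does suppress all codimension $\ge 2$ degeneracies (simultaneous events, or events in which $\dim\alpha + \dim\beta < d - 1$, or intersection points hitting a face of codimension $\ge 2$), and to write down the explicit local model near a boundary crossing so as to confirm that each admissible $\nu$, respectively $\mu$, contributes exactly $1 \pmod 2$. It is also worth pointing out the compatibility remark that whenever $\nu \supset \alpha$ and $\nu \cap \beta = \emptyset$, the pair $\alpha, \beta$ is automatically disjoint, so every term in the hypothesis genuinely arises; no combinatorial input beyond the stated parity condition is needed.
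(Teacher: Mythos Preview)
Your argument is correct and is essentially the same cobordism idea as the paper's proof: connect $f_0$ and $f_1$ by a general position PL homotopy and show that the parity hypothesis forces $v$ to be preserved. The only difference is packaging. You run a ``movie'' argument, slicing the homotopy at each time $t$ and classifying the critical events (tangencies and boundary crossings). The paper instead takes the homotopy $H\colon |K|\times I\to\R^d\times I$, forms the $1$-dimensional intersection set
\[
v(H)=\bigcup_{\substack{\sigma\cap\tau=\emptyset\\ \dim\sigma+\dim\tau=d}} H(\sigma\times I)\cap H(\tau\times I),
\]
views it as a graph, and observes that its vertices on $\R^d\times\{0,1\}$ have odd degree while its interior vertices have even degree (this is exactly where the parity hypothesis enters); the handshaking lemma then gives $v(H_0)=v(H_1)$. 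Your tangency events correspond to the degree-$2$ (or no-vertex) pieces of this graph, and your boundary-crossing events to the interior vertices. The graph formulation is a bit slicker because it bypasses the explicit classification of generic critical events and the attendant PL general-position bookkeeping you flag as the main obstacle, but the mathematical content is the same.
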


For $d=2$ and $K=K_5$ this corresponds to well-known proof of the non-planarity of $K_5$ \cite[Lemma 3.4]{Sk}, \cite[\S5]{BE82}.
For the general case the proof is analogous.
% (and so might be known).

\begin{proof}[Proof of Lemma~\ref{l:pa}] Lemma~\ref{l:pa} follows analogously to \cite[Lemma 3.5]{Sh57} (by interpreting $v(f)$ as an obstruction to the existence of certain equivariant map).
%$\widetilde K\to S^{d-1}$).

A direct proof is as follows.
Take a general position PL homotopy $H:|K|\times I\to\R^d\times I$ between general position PL maps $H_0,H_1:|K|\to\R^d$.
Then
$$v(H):=\cup\{H(\sigma\times I)\cap H(\tau\times I)\ :\ \sigma,\tau\in K,\ \sigma\cap\tau=\emptyset,\ \dim\sigma+\dim\tau=d\}$$
is a graph.
For each $i=0,1$ the vertices of this graph in $\R^d\times i$ are exactly the points $x$ from the definition of $v(H_i)$.
So $v(H_i)$ equals to the number modulo 2 of vertices of this graph in $\R^d\times i$.
This graph also has vertices in $\R^d\times(0,1)$ corresponding to pairs $(\nu,\tau)$ and $(\sigma,\mu)$
from the bullet points of the lemma.
(There could be connected components of $v(H)$ containing no such vertices;
there could be some other vertices of even degree, e.g. vertices of degree 2 coming from double points of $H$ or
vertices of degree 4 coming from triple points of $H$.)

Analogously to \cite[Lemma 11.4]{Hu69}  any vertex of this graph

$\bullet$ contained in $\R^d\times i$ has odd degree.
%!

$\bullet$ contained in $\R^d\times(0,1)$ has even degree (by the bullet points of the lemma).
%!

Hence $v(H_0)=v(H_1)$.
\end{proof}

\begin{lemma}
%[Generalized Van Kampen-Flores Theorem]
\label{l:odd_obstruction}
For any $\ell<k$ and general position PL map $f\colon |F| \to \R^{k + \ell + 1}$ we have $v(f)=1\in\Z_2$.
\end{lemma}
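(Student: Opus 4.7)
The plan is to invoke Lemma~\ref{l:pa} to reduce $v(f)$ to a single computation, then evaluate $v(f_0)$ for an explicit cyclic-polytope map via Radon's theorem.

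First I verify the parity hypothesis of Lemma~\ref{l:pa} for $(F,\,d=k+\ell+1)$. For a disjoint pair $\sigma,\tau$ in $F$ with $\dim\sigma+\dim\tau=k+\ell$, either neither contains $p$ (so $|\sigma|+|\tau|=k+\ell+2$ and one vertex of $[k+\ell+3]$ is spare) or exactly one of them contains $p$ (two spares). In each sub-case I enumerate the one-vertex extensions $\nu\supset\sigma$ and $\mu\supset\tau$ that lie in $F$, respecting both ceilings (size $\le k+1$ when the new simplex avoids $p$, size $\le\ell+2$ when it contains $p$), and are disjoint from the other simplex; in particular extending $\tau$ by $p$ is automatically forbidden once $p\in\sigma$. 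A short table shows the two extension counts always agree mod $2$. By Lemma~\ref{l:pa}, $v(f)$ is therefore the same for every general-position PL map $f$.

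Next I compute $v(f_0)$ for the following $f_0$: put $v_i$ at the moment-curve point $(i,i^2,\ldots,i^{k+\ell+1})$ for $i\in[k+\ell+3]$, put $f_0(p)$ at the moment-curve point with parameter $k+\ell+4$, and extend affinely on each simplex of $F$. For any disjoint pair $(\sigma,\tau)$ with $\dim\sigma+\dim\tau=d$ we have $|\sigma\cup\tau|=d+2$, and these $d+2$ points on the moment curve are in general position in $\R^{d}$; by Radon's theorem they admit a unique Radon partition, which for a cyclic polytope is the alternation by parameter, with sides of sizes $\lceil(k+\ell+3)/2\rceil$ and $\lfloor(k+\ell+3)/2\rfloor$. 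Hence $f_0(\sigma)\cap f_0(\tau)$ consists of a single point when $(\sigma,\tau)$ is that alternating partition, and is empty otherwise.

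It remains to determine which pairs realise the alternation. Type I pairs ($\sigma,\tau\subseteq[k+\ell+3]$) use all of $[k+\ell+3]$, and both Radon sizes are $\le k+1$ since $\ell<k$; both sides therefore lie in the $k$-skeleton of $F$, contributing exactly $1$ intersection. Type II pairs ($\sigma\ni p$, $\tau\subseteq[k+\ell+3]$) use $\{f_0(p)\}\cup\{v_i:i\ne v\}$ for some leftover $v\in[k+\ell+3]$; since $f_0(p)$ has the largest parameter, the alternating partition puts $f_0(p)$ in the part of size $\lceil(k+\ell+3)/2\rceil$, and this equals the required $\ell+2$ iff $\ell=k-1$. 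Thus Type II contributes $k+\ell+3=2k+2$ intersections when $\ell=k-1$ and $0$ otherwise. Summing, $v(f_0)\equiv 1\pmod 2$ when $\ell<k-1$, and $v(f_0)\equiv 1+(2k+2)=2k+3\equiv 1\pmod 2$ when $\ell=k-1$, establishing $v(f)=1$.

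The main obstacle is the case analysis in the first step: it is entirely elementary but requires care to track both dimension ceilings of $F$ and the forbidden extensions forced by the shared vertex $p$. Once that is done, the count in Step~2 is a short classical application of Radon's theorem for cyclic polytopes.
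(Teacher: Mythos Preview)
Your argument is correct and follows the same overall two-step strategy as the paper: verify the parity hypothesis of Lemma~\ref{l:pa}, then compute $v$ for one concrete map. The difference lies in the second step. The paper does not carry out this computation at all; it simply quotes \cite[Lemma~1.1]{SeSp92} for the existence of \emph{some} $f$ with $v(f)=1$, and spends its effort on the explicit case analysis for the parity hypothesis. You invert the emphasis: you only sketch the parity check (``a short table shows\ldots''), and instead give a self-contained evaluation of $v(f_0)$ for the linear map on moment-curve vertices, using the alternating Radon partition for cyclic polytopes. Your count is right: the Type~I alternating partition of $[k+\ell+3]$ always lies in the $k$-skeleton because $\ell<k$, contributing one intersection; Type~II pairs force $|\sigma|=\ell+2$ exactly, so they contribute only when $\lceil(k+\ell+3)/2\rceil=\ell+2$, i.e.\ $\ell=k-1$, and then contribute $2k+2\equiv 0$. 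What you gain is independence from the external reference; what you lose is that your parity verification is not actually written out (the paper's three-case breakdown by $s=\ell$ versus $s>\ell$ is short and worth including), and you should add one sentence noting that $f_0$ may be perturbed to genuine general position without changing the pairwise transversal intersection counts.
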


\begin{proof}
%[Proof of Lemma~\ref{l:odd_obstruction}]
For {\it some} $f$ Lemma \ref{l:odd_obstruction} was proved in \cite[Lemma~1.1]{SeSp92}.
Then Lemma \ref{l:odd_obstruction} follows for {\it any} $f$ by Lemma \ref{l:pa} after we verify the assumptions
of the lemma.

Take any pair $\sigma, \tau$ of disjoint $s$- and $t$-simplices in $F$ such that $s+t=k + \ell$.
Without loss of generality we assume that $s\le t$.
Since $t\le k$, we obtain $s\ge\ell$.
Among $(k+1)+(\ell+1)+2$ vertices of $F$ there are exactly two which are not contained in $\sigma\cup\tau$.
We distinguish two cases.
%, according to the dimension of $\sigma$.

\begin{itemize}
\item {\it Case $s=\ell$.} In this case $t=k$.
Since $\dim F = k$, the simplex $\tau$ cannot be extended to a simplex of $F$ (disjoint with $\sigma$).
Since $F$ contains complete $(\ell + 1)$-skeleton, $\sigma$ can be extended (to $(s+1)$-simplex of $F$ disjoint with $\tau$) by both vertices of $F$ not contained in $\sigma\cup\tau$.
The numbers $0$ and $2$ have the same parity as required.

\item {\it Case $s>\ell$.} In this case $\ell <t< k$.

% In this case, we further distinguish whether one of the simplices $\sigma$ or $\tau$ contains the apex $p$
%from the definition of $F$.

{\it Subcase when neither $\sigma$ nor $\tau$ contains $p$.} Since $p$ is contained only in simplices of dimension at most $\ell + 1$, neither $\sigma$ not $\tau$ can be extended by $p$ to a simplex of $F$.
On the other hand, since $s,t< k$, the remaining vertex of $F$ can serve for extension of both $\sigma$ or $\tau$.
The numbers $1$ and $1$ have the same parity as required.
%Thus $N=1+1$ is even.

{\it Subcase when $\sigma$ or $\tau$ contains $p$.} Since $t>\ell$ and $p$ can be only contained in a simplex of dimension at most $\ell + 1$, we can without loss of generality assume that $p\in\sigma$ and $s= \ell + 1$.
Since $p$ does not belong to any $(\ell +2)$-simplex, it follows that $\sigma$ cannot be extended.
On the other hand, $\tau$ can be extended in two ways to both vertices of $F$ not contained in $\sigma\cup\tau$.
The numbers $0$ and $2$ have the same parity as required.
%Thus $N=0+2$ is even.
\end{itemize}
\end{proof}

\begin{proof}[Proof of Lemma~\ref{l:5.1}]
%modulo Lemma \ref{l:odd_obstruction}]
Extend the map $f$ to a general position PL map $g\colon |F| \to \R^{k+\ell+1}$.
Since $\ell+2k<2(k+\ell+1)$, by general position to every point $x$ from the definition of $v(g)$ there corresponds a unique unordered pair of simplices of $F$, the sum of whose dimensions is $d$ and the intersection of whose $g$-images contains $x$.
Since $f$ is an almost-embedding, for every such point $x$ there is a unique $i\in[3]$ such that
$x\in g(\sigma_i)\cap g(S_i)$.
This and Lemma~\ref{l:odd_obstruction} imply that
$\sum_{i=1}^3|g(\sigma_i)\cap g(S_i)|\underset2\equiv v(g)=1\in\Z_2$.
Hence one of the three summands is odd as required.
\end{proof}

%\fi

\bibliographystyle{alpha}
\bibliography{ae}

\iffalse
%\newpage

\fi
\end{document}